\providecommand{\U}[1]{\protect\rule{.1in}{.1in}}
\newtheorem{theorem}{Theorem}
\newtheorem{lemma}[theorem]{Lemma}
\newtheorem{proposition}[theorem]{Proposition}
{\theorembodyfont{\rmfamily}
\newtheorem{remark}[theorem]{Remark}
}
\newenvironment{proof}[1][Proof]{\noindent\textbf{#1} }{\ \rule{0.5em}{0.5em}}
\newcommand*\re{\mathbb{R}}
\newcommand*\Omegabar{\overline{\Omega}}
\newcommand*\delomega{\partial\Omega}
\newcommand*\intdelomega{\int_{\partial\Omega}}
\newcommand*\Hol{\operatorname{Hol}}
\newcommand*\Real{\operatorname{Re}}
\newcommand*\Imag{\operatorname{Im}}
\date{}
\begin{document}

\title{An Isoperimetric Problem With Density and the Hardy Sobolev Inequality in $\re^2$}

\maketitle

\centerline{\scshape Gyula Csat\'{o} }
\medskip
{\footnotesize
 \centerline{Tata Institute of Fundamental Research, Centre For Applicable Mathematics, 560065 Bangalore, India}
   \centerline{gy.csato.ch@gmail.com}

}

\smallskip

\begin{abstract}
We prove, using elementary methods of complex analysis, the following generalization of the isoperimetric inequality: if $p\in\re$, $\Omega\subset\re^2$ then the inequality
$$
  \left(\frac{|\Omega|}{\pi}\right)^{\frac{p+1}{2}}\leq\frac{1}{2\pi}\int_{\delomega}|x|^pd\sigma(x)
$$
holds true under appropriate assumptions on $\Omega$ and $p.$ This solves an open problem arising in the context of isoperimetric problems with density and poses some new ones (for instance generalizations to $\re^n$).
We prove the equivalence with a Hardy-Sobolev inequality, giving the best constant, and generalize thereby the equivalence between the classical isoperimetric inequality and the Sobolev inequality. 
Furthermore, the inequality paves the way for solving another problem: the generalization of the harmonic transplantation method of Flucher to
the singular Moser-Trudinger embedding.
\end{abstract}

\let\thefootnote\relax\footnotetext{\textit{2010 Mathematics Subject Classification.} Primary 49Q20, Secondary 30C35, 26D10.}
\let\thefootnote\relax\footnotetext{\textit{Key words and phrases.} Isoperimetric problem with density, Hardy-Sobolev inequality, best constant, Moser-Trudinger embedding}

\section{Introduction}

Let $\Omega\subset\re^2$ be a bounded open simply connected set with a piecewise $C^1$ boundary $\partial\Omega.$ Suppose that $0\in\Omega,$ and let $p\geq -1.$ The following inequality is our main result:
\begin{equation}
 \label{eq:main isop. ineq. intro.}
  \left(\frac{|\Omega|}{\pi}\right)^{\frac{p+1}{2}}\leq\frac{1}{2\pi}\int_{\delomega}|x|^pd\sigma(x),
\end{equation}
where $|\Omega|$ denotes the $2$ dimensional volume of $\Omega$ and $d\sigma(x)$ is the $1$ dimensional Hausdorff measure. All other results will be deduced from this inequality. Setting $p=0$ we obtain the classical isoperimetric inequality. The proof is very simple, self-contained and uses only basic properties of holomorphic functions and the Riemann mapping theorem. This is a new approach for tackling an isoperimetric problem with density.

Isoperimetric problems with densities, or also called weights, are being investigated with increasing interest in the recent years. The general question is the following: Given two positive functions $f,g$ and defining the weighted volume $V_f$ and weighted perimeter $P_g$ as
$$
  V_f(\Omega)=\int_{\Omega}f(x)dx,\qquad P_g(\Omega)=\int_{\delomega}g(x)d\sigma(x),
$$
one studies the existence of minimizers of
\begin{equation}\label{intro:minimization classical with density}
  I(C)=\inf\left\{P_g(\Omega):\,\Omega\subset\re^2\text{ and }V_f(\Omega)=C\right\}.
\end{equation}
There are many recent results with different kinds of constraints on $f$ and $g,$ see for instance  \cite{Rosales Canete Bayle Morgan}, \cite{Figalli-Maggi.LogConvex}, \cite{Figalli-Maggi-Pratelli},
\cite{Fusco-Maggi-Pratelli}, \cite{Morgan-Pratelli} and the references therein. Motivated by two important applications in the theory of partial differential equations it turns out to be natural to investigate the following special case: let $p>-1$ and $q>-2$ and consider
\begin{equation}\label{intro:minimization scaled new with density}
  I(C)=  \inf\left\{\int_{\delomega}|x|^pd\sigma
  :\,\Omega\subset\re^2\text{ and }\left(\int_{\Omega}|x|^qdx\right)^{\frac{p+1}{q+2}}=C\right\}.
\end{equation}
In contrast to \eqref{intro:minimization classical with density} one of the main differences is that this problem is scaling invariant in the sense that if $\Omega$ is a minimizer for $I(C),$ then  $\lambda\Omega$ is a minimizer for $I(\lambda^{p+1}C).$ Hence, for the study of optimal shapes the value of $C$ becomes irrelevant and can be fixed. In the case $p=q,$ the combination of the works of several authors \cite{Canete-Miranda-Wittone}, \cite{Carroll-Jacob-Quinn-Walters}, \cite{Morgan-Regularity} and finally \cite{Dahlberg-Dubbs-Newkirk-Tran} have led to the solution, which states that balls whose boundary contains the origin are the minimizers. D\'iaz-Harman-Howe-Thompson \cite{Diaz-Harman-Howe-Thompson} are able to extend this to some more general results (see \cite{Diaz-Harman-Howe-Thompson} Proposition 4.21), but the range of values of $p$ and $q$ is such that  the classical isoperimetric inequality does not appear as a special case. This is precisely one of 
the remaining open problems, see \cite{Diaz-Harman-Howe-Thompson} Open Questions 1.5 (4), i.e. $q=0$ and $p\in(0,1).$ Our Theorem \ref{theorem:main theorem weighted isop. inequlity} (ii) settles precisely this problem. They also conjecture many more results for more general values of $p$ and $q,$ see 
\cite{Diaz-Harman-Howe-Thompson} Conjecture 4.22. 

Another closely related work is by Betta-Brock-Mercaldo-Posteraro \cite{Betta and alt.}, whose result implies the case $q=0$ and $p\geq 1.$  We will state and use this result (see Theorem \ref{theorem:Betta et alt}) to simplify our proof for the case $p\geq 1$ and to give a complete picture. In their proof one adds the  classical isoperimetric inequality to rather generous estimates arising from a weight which satisfies a monotonicity and convexity condition. We point out that the relevant applications for PDE in this paper are for the range $p\in [-1,1],$ where convexity fails and which will be precisely those including the classical isoperimetric inequality as a special case.

It is well known since the paper of Talenti \cite{Talenti} that the Sobolev embedding, with the best constant,
\begin{equation}\label{eq:intro:Sobolev classical}
  \|u\|_{L^2}\leq\frac{1}{2\sqrt{\pi}}\int_{\re^2}|\nabla u(x)|dx\quad\text{ for all }u\in C_c^{\infty}\left(\re^2\right)
\end{equation}
is equivalent to the isoperimetric inequality. This equivalence is essentially due to Federer-Fleming \cite{Federer-Fleming} and Fleming-Rishel \cite{Fleming-Rishel}, by means of the coarea formula. We generalize this equivalence to the Hardy-Sobolev inequality. To the knowledge of the author, such a proof of the Hardy-Sobolev, respectively Caffarelli-Kohn-Nirenberg inequalities, is new. Moreover, it gives the best constant. More precisely it turns out that for values 
$-2<p-1\leq q\leq 2p$
the problem \eqref{intro:minimization scaled new with density} is equivalent with determining the best constant in the following case of the
Caffarelli-Kohn-Nirenberg \cite{Caffarelli-Kohn-Nirenberg} inequality:
\begin{equation}
 \label{intro main. Caff-Kohn-Ni ineq.}
  \|u\,|x|^{\gamma}\|_{L^r}\leq C\|Du\,|x|^{\alpha}\|_{L^1}\quad\text{ for all }u\in C_c^{\infty}(\re^2),
\end{equation}
where $\alpha,\gamma$ and $r$ satisfy certain compatibility conditions, see \eqref{eq:conditions in Caff-Kohn-Ni ineq.}.
If $q=0,$ the range $p\in (-1,1]$ is the best possible to establish such an equivalence with this method. For $p\in [0,1]$ we obtain precisely \eqref{intro main. Caff-Kohn-Ni ineq.}, whereas for $p\in (-1,0)$ the set of admissible functions $C_c^{\infty}(\re^2)$ has to be restricted appropriately. For instance, one of our results states that if $p\in [0,1]$ then 
$$
  \|u\|_{L^r(\re^2)}\leq \frac{\pi^{1/r}}{2\pi}\int_{\re^2}|\nabla u(x)|\,|x|^p dx,\quad\forall\,u\in C_c^{\infty}\left(\re^2\right),\quad r=\frac{2}{p+1}.
$$
Setting $p=0$ gives \eqref{eq:intro:Sobolev classical}.
Best constants and extremal functions have been investigated for all kinds of cases for the Sobolev and Caffarelli-Kohn-Nirenberg inequality and the literature is very large, see for instance \cite{Catrina-Wang}, \cite{Chou and Chu}, \cite{CorderoVillani et alt.}, \cite{Horiuchi} and \cite{Lieb}, and the references therein. However, the best constant has not yet been established in the present case. 

Our original motivation for the weighted isoperimetric inequality, in particular the range $p\in [-1,0],$ came from the Moser-Trudinger inequality.
The existence of an extremal function for the Moser-Trudinger embedding when $\Omega=B_1$ (unit ball) has been shown by Carleson-Chang \cite{Carleson-Chang}; see also the recent result of Malchiodi-Martinazzi \cite{Malchiodi-Martinazzi} in dimension $2.$ Flucher \cite{Flucher} proved the so-called \textit{functional isoperimetric inequality} to generalize the result to arbitrary domains $\Omega\subset\re^2.$ As the name suggests, Flucher's proof uses the isoperimetric inequality in a crucial way. \eqref{eq:main isop. ineq. intro.} will allow us to generalize this method to more general functionals involving weights, such as (cf. Adimurthi-Sandeep \cite{Adi-Sandeep})
$$
  u\in W_0^{1,2}(\Omega)\to \int_{\Omega}\frac{e^{\alpha u^2}}{|x|^{\beta}}dx.
$$
where $\Omega\subset\re^2$ is a domain containing the origin and $\alpha/(4\pi)+\beta/2\leq 1.$ We have established the existence of extremal functions in a separate paper \cite{Csato-Roy} and restrict ourselves here to generalizing only that crucial inequality where the isoperimetric inequality is used: If $G_{\Omega,x}$ is the Green's function with singularity at $x\in\Omega,$ then Flucher \cite{Flucher} has shown that $|\Omega|$ can be estimated by
$$
  |\Omega|\leq\frac{1}{4\pi}\intdelomega\frac{1}{|\nabla G_{\Omega,x}(y)|}d\sigma(y).
$$
In Theorem \ref{theorem:upper bound for R by Greens function with weight} we will prove a generalization of this estimate. 

It is clear that many of the ideas presented in this paper give rise to possible generalizations: higher dimensions and other values of $p$ and $q.$

\section{A Weighted Isoperimetric Inequality}

The following is the main theorem of this section.

\begin{theorem}
\label{theorem:main theorem weighted isop. inequlity}
Let $\Omega\subset\re^2$ be a bounded open set with piecewise $C^1$ boundary $\delomega.$  Regarding the inequality
\begin{equation}
 \label{theorem:eq:main isop. ineq.}
  \left(\frac{|\Omega|}{\pi}\right)^{\frac{p+1}{2}}\leq\frac{1}{2\pi}\int_{\delomega}|x|^pd\sigma,
\end{equation}
the following statements hold true:
\smallskip


(i) If $p\geq -1,$ then \eqref{theorem:eq:main isop. ineq.} holds if $\Omega$ is connected and $0\in\Omega.$ 
\smallskip 

(ii) If $p\geq 0,$ then \eqref{theorem:eq:main isop. ineq.} holds for all $\Omega.$
\smallskip

(iii) If $p\geq -1,$ $\Omega$ satisfies the conditions of (i), $\partial\Omega$ is $C^2$ and there is equality in \eqref{theorem:eq:main isop. ineq.}, then $\Omega$ is a ball. If $p\neq 0,$ then this ball must be centered at the origin.
\smallskip

(iv) If $p>0,$ $0\notin\delomega,$ $\delomega$ is $C^2,$ and there is equality in \eqref{theorem:eq:main isop. ineq.}, then $\Omega$ is a ball centered at the origin. If in addition $p\geq 1,$ then the same conclusion holds without the assumption $0\notin\delomega.$
\end{theorem}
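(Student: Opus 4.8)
The plan is to exploit the Riemann mapping theorem to parametrize $\Omega$ by a conformal map $\varphi\colon B_1\to\Omega$, and to express both $|\Omega|$ and the weighted boundary integral $\int_{\delomega}|x|^pd\sigma$ in terms of the Taylor coefficients of $\varphi$. Writing $\varphi(z)=\sum_{k\ge 0}a_kz^k$, the area is $|\Omega|=\pi\sum_{k\ge 1}k|a_k|^2$ by the standard area formula, while the boundary integral becomes $\int_0^{2\pi}|\varphi(e^{i\theta})|^p|\varphi'(e^{i\theta})|\,d\theta$. The substance of the inequality \eqref{theorem:eq:main isop. ineq.} is then a relation between a weighted $L^1$ norm of $|\varphi'|$ on the circle and a power of the $\ell^2$ sum of the coefficients, which one should try to reduce to the cases $p=-1$ and $p=0$ (or to an interpolation/convexity argument in $p$) together with the classical Carleman/area estimates. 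For the equality cases I would trace through which inequalities in that chain are equalities.

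The relevant parts here are statements (iii) and (iv), so I would proceed as follows. First, under the hypotheses of (i) with $C^2$ boundary, equality in \eqref{theorem:eq:main isop. ineq.} forces equality in every intermediate estimate; the key one is a pointwise or integrated inequality of the form $|x|^p|\varphi'|\ge(\text{something involving }|a_1|)$ that becomes sharp only when $\varphi$ is linear, i.e.\ $\varphi(z)=a_0+a_1z$, which means $\Omega$ is a ball of radius $|a_1|$ centered at $a_0$. Then I would handle the location of the center: when $p\ne 0$, a ball not centered at the origin can be compared with the concentric ball of the same radius, and a separate computation (differentiating the weighted perimeter of $B_r(c)$ in $c$, or using the strict convexity of $c\mapsto\int_{\partial B_r(c)}|x|^pd\sigma$ for $p\ne 0$) shows the weighted perimeter is strictly minimized at $c=0$ for fixed $r=|a_1|$, while the area is unchanged; hence equality forces $a_0=0$. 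For $p=0$ there is of course no constraint on the center, consistent with the classical statement.

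For part (iv) the point is to remove, for $p>0$, the assumption $0\in\Omega$ that was built into (i). If $0\notin\Omega$ and $0\notin\delomega$, so $0$ lies in the unbounded component of the complement, I would argue that the inequality (for $p>0$) still holds — this is presumably covered by statement (ii) when $p\ge 0$, which is stated for all $\Omega$ — and that equality again forces $\Omega$ to be a ball; but a ball with $0$ in the exterior cannot achieve equality, because one can translate it toward the origin, strictly decreasing $\int_{\delomega}|x|^pd\sigma$ while keeping $|\Omega|$ fixed (monotonicity of $c\mapsto\int_{\partial B_r(c)}|x|^pd\sigma$ along rays, for $p>0$), contradicting equality. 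The remaining subcase $0\in\delomega$ is ruled out similarly: a ball through the origin is not optimal for $p>0$ since pushing it slightly toward the origin again decreases the weighted perimeter, so equality with $0\in\delomega$ is impossible when $0<p<1$; whereas for $p\ge 1$ one invokes the result of Betta--Brock--Mercaldo--Posteraro (Theorem \ref{theorem:Betta et alt}), under which balls through the origin \emph{are} the extremals, giving the stated stronger conclusion.

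The main obstacle I anticipate is the rigidity step: showing that equality in the coefficient inequality forces $\varphi$ to be linear. In the unweighted case ($p=0$) this is the classical fact that equality in the isoperimetric inequality for the coefficient expansion forces all $a_k=0$ for $k\ge 2$; with the weight $|x|^p$ present, the estimate of $\int|\varphi(e^{i\theta})|^p|\varphi'(e^{i\theta})|\,d\theta$ from below is more delicate, and one must be careful that the chain of inequalities is genuinely reversible — in particular that the $C^2$ regularity of $\partial\Omega$, which excludes degenerate boundary behavior of $\varphi'$, is being used in an essential way. Pinning down exactly where positivity of $p$ enters (versus $p\ge -1$) in the center-location argument, and correctly separating the subcases $0\in\Omega$, $0\in\delomega$, $0\notin\Omegabar$, is the other place where care is needed.
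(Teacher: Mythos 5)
There is a genuine gap at the heart of the proposal: the choice of conformal parametrization makes the two key estimates point in incompatible directions. With the interior map $\varphi:B_1\to\Omega$, $\varphi(z)=\sum_{k\ge0}a_kz^k$, the area identity $|\Omega|=\pi\sum_{k\ge1}k|a_k|^2$ is a \emph{lower} bound for $|\Omega|$ in terms of $|a_1|$, and the only estimate the mean value property yields for the boundary term is again a lower bound, $\int_{\delomega}|x|^pd\sigma\ge 2\pi|a_1|^{p+1}$ (writing $\varphi=z\Psi$ with $\Psi$ zero-free and applying Cauchy's formula to $\Psi^p\varphi'$). Two lower bounds cannot be chained into \eqref{theorem:eq:main isop. ineq.}; you would need $|a_1|^2\ge\sum_k k|a_k|^2$, which is false. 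The paper's proof (following Mateljevic--Pavlovic) resolves exactly this by conformally parametrizing the \emph{inversion of the exterior}: it maps $B_1$ onto $D=\{1/z:z\in\Omegabar^c\}\cup\{0\}$ and works with $g(z)=\lambda/z+P(z)$, for which the area computation gives the \emph{upper} bound $|\Omega|\le\pi|\lambda|^2$ (the higher coefficients enter with a minus sign), while the Cauchy mean value formula applied to $z^2g'(z)\exp(p\varphi(z))$, with $\exp\varphi=zg$ zero-free, gives $2\pi|\lambda|^{p+1}\le\int_{\delomega}|x|^pd\sigma$. Your fallback suggestion, reducing general $p$ to the endpoints $p=-1,0$ by interpolation or convexity in $p$, also fails: by H\"older, $p\mapsto\log\int_{\delomega}|x|^pd\sigma$ is convex, so endpoint lower bounds give an \emph{upper} bound at intermediate $p$, which is useless here. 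Since the whole theorem (parts (i)--(iv)) is at issue, this missing mechanism is fatal; and you yourself flag that the rigidity step ("equality forces $\varphi$ linear") is unresolved, which is the substance of (iii).

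Two further concrete problems in (iii)--(iv). Your center-location argument (a non-centered ball has strictly larger weighted perimeter than the concentric centered one, by Jensen applied to the harmonic function $p\log|1+we^{i\theta}|$) is a legitimate alternative to the paper's equality analysis, but it presupposes the unproven rigidity. In (iv), for $0\notin\Omegabar$ you assert that equality "again forces $\Omega$ to be a ball"; the conformal rigidity argument is only available when $0\in\Omega$ (the construction of $D$ and the pole of $g$ require it), so this claim is unsupported. The paper instead shows equality is simply impossible when $0\notin\Omegabar$: the cut-along-a-separating-line-and-translate construction from the proof of (ii) produces $\Omega_2'$ with $|\Omega|<|\Omega_2'|$ and $\int_{\partial\Omega_2'}|x|^pd\sigma\le\int_{\delomega}|x|^pd\sigma$, a strict comparison that rules out equality directly. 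You also omit the disconnected case in (iv), where the paper uses strictness of $\left(R_1^2+\cdots+R_l^2\right)^{(p+1)/2}\le R_1^{p+1}+\cdots+R_l^{p+1}$ for $0\le p<1$ to force $l=1$.
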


\begin{remark}
\label{remark:after main theorem for weighted isop ineq.}
(a) We will sometimes use the following equivalent formulation: let $B_R=B_R(0)$ be the ball of radius $R$ and center at the origin such that $|\Omega|=|B_R|=\pi R^2.$ Then we have 
\begin{equation}
 \label{eq:remark to main theorem:equiv formulation with BR}
  \int_{\delomega}|x|^pd\sigma\geq\int_{\partial B_R}|x|^pd\sigma=2\pi R^{p+1}.
\end{equation}
Or in other words: Among all admissible sets with given fixed volume the set $B_R(0)$ is a minimizer of the weighted perimeter.
\smallskip

(b) If $-1\leq p<0,$ neither the assumption that $\Omega$ is connected, nor that $0\in\Omega$ can be dropped. Thus the hypothesis in (i) is optimal. For $\Omega$ not connected, take the following example: fix $r>0$ and $\Omega$ is the union of two disconnected balls $\Omega=B_r(0)\cup B_r(y).$ Obviously $0\in\Omega.$ For $|y|$ large enough we get a contrdiction.
For the case $0\notin \Omega$ one can argue in a similar way.

\smallskip
(c) The condition $0\notin\delomega$ in Part (iv) if $p\in(0,1)$ is for technical reasons, due probably to the method of proof. The hypothesis $0\notin\delomega$ can clearly be removed if $p\geq 1,$ in view of Theorem 4.3 in \cite{Betta and alt.}.
\end{remark}

The case $p\geq 1$ is already known since it follows from Betta-Brock-Mercaldo-Posteraro \cite{Betta and alt.} Theorem 2.1. We state here their result in dimension 2, however it is also valid in higher dimensions.

\begin{theorem}[Betta-Brock-Mercaldo-Posteraro]\label{theorem:Betta et alt} Let $\Omega\subset\re^2$ be a bounded open set with Lipschitz boundary. Let $R>0,$ be such that $|\Omega|=\pi R^2.$ Let a be a function $a:[0,\infty)\to [0,\infty)$ with the following two properties
\begin{equation}
  \label{assumptions on a in Betta and alt.}
(i)\quad
   a \quad\text{ is nondecreasing}\qquad (ii)\quad
   z\mapsto \left(a\left(\sqrt{z}\right)-a(0)\right)\, \sqrt{z}\quad\text{ is convex}.
\end{equation}
Then
$$
  \intdelomega a(|x|)d\sigma(x)\geq \int_{\partial B_R(0)} a(|x|)d\sigma(x)
  =2\pi R\,a(R).
$$
\end{theorem}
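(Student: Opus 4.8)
The plan is to reduce the weighted statement to the classical isoperimetric inequality plus a single ``generous'' lower bound for the part of the weight that vanishes at the origin. Write $g(r):=a(r)-a(0)$, so that $g\geq 0$ (by monotonicity of $a$), $g(0)=0$, $g$ is nondecreasing, and by condition (ii) the function $\phi(z):=g(\sqrt z)\sqrt z=(a(\sqrt z)-a(0))\sqrt z$ is convex on $[0,\infty)$ with $\phi(0)=0$. Splitting $a(|x|)=a(0)+g(|x|)$ gives
\begin{equation*}
  \intdelomega a(|x|)\,d\sigma=a(0)\,\intdelomega d\sigma+\intdelomega g(|x|)\,d\sigma.
\end{equation*}
The first term is handled immediately by the classical isoperimetric inequality: $\intdelomega d\sigma=\mathcal H^1(\delomega)\geq 2\pi R$. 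Since $2\pi R\,a(R)=a(0)\,2\pi R+2\pi R\,g(R)=\int_{\partial B_R}a(|x|)\,d\sigma$, it remains only to prove the estimate
\begin{equation}\label{eq:plan:g-estimate}
  \intdelomega g(|x|)\,d\sigma\geq 2\pi R\,g(R).
\end{equation}

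For \eqref{eq:plan:g-estimate} I would apply the divergence theorem to the radial field $V(x):=g(|x|)\,x/|x|$, which is continuous on $\re^2$ (note $V(0)=0$ because $g(0)=0$). On $\delomega$ one has the ``generous'' pointwise bound $V\cdot\nu=g(|x|)\,(x\cdot\nu)/|x|\leq g(|x|)$, using $g\geq 0$ and $(x\cdot\nu)/|x|\leq 1$. Since a planar radial field has divergence $\operatorname{div}V=g'(r)+g(r)/r=:\psi(r)$ with $r=|x|$, the Gauss--Green formula yields
\begin{equation*}
  \intdelomega g(|x|)\,d\sigma\geq\intdelomega V\cdot\nu\,d\sigma=\intomega\operatorname{div}V\,dx=\intomega\psi(|x|)\,dx.
\end{equation*}
A direct computation on $B_R$ gives $\int_{B_R}\psi(|x|)\,dx=2\pi\int_0^R\bigl(tg(t)\bigr)'\,dt=2\pi R\,g(R)$, so \eqref{eq:plan:g-estimate} follows once one knows $\intomega\psi(|x|)\,dx\geq\int_{B_R}\psi(|x|)\,dx$ whenever $|\Omega|=|B_R|=\pi R^2$.

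This last inequality is a bathtub/rearrangement fact that holds precisely because $\psi$ is radially nondecreasing, and here is where condition (ii) enters \emph{exactly}: differentiating $\phi(z)=g(\sqrt z)\sqrt z$ gives $\phi'(z)=\tfrac12\,\psi(\sqrt z)$, so the convexity of $\phi$ (monotonicity of $\phi'$) is equivalent to $\psi$ being nondecreasing in $r$; moreover $\phi\geq 0=\phi(0)$ forces $\phi'(0^+)\geq 0$, hence $\psi\geq 0$ everywhere. Given a nonnegative radially nondecreasing $\psi$ and $|\Omega|=|B_R|$, I would write $\intomega\psi-\int_{B_R}\psi=\int_{\Omega\setminus B_R}\psi-\int_{B_R\setminus\Omega}\psi$ and use $\psi(|x|)\geq\psi(R)$ on $\Omega\setminus B_R$, $\psi(|x|)\leq\psi(R)$ on $B_R\setminus\Omega$, together with $|\Omega\setminus B_R|=|B_R\setminus\Omega|$, to conclude the difference is $\geq 0$. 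Combining this with the divergence estimate and the classical isoperimetric inequality for the constant part proves the theorem. (Note also why the split is necessary: applying the field directly to $a$ would produce $a'(r)+a(r)/r=\psi(r)+a(0)/r$, whose extra \emph{decreasing} term $a(0)/r$ destroys the required monotonicity.)

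The main obstacle is not the algebra but the regularity: $a$ is only nondecreasing and satisfies (ii), so $g$ need not be $C^1$ and $\delomega$ is only Lipschitz, which means the Gauss--Green step must be justified for a merely continuous field $V$ with $L^1_{\mathrm{loc}}$ distributional divergence over a Lipschitz domain. I would handle this by approximation: mollify $g$ into smooth nondecreasing functions preserving the convexity of the associated $\phi$ (equivalently, the monotonicity of $\psi$), prove the inequality for the smooth approximants, and pass to the limit by monotone convergence, all integrands being nonnegative. One should also record that $\psi\in L^1(\Omega)$, which is clear since $\psi$ is nondecreasing and finite, with $0\le\psi(0^+)\le\psi(|x|)\le\psi(\operatorname{diam}\Omega)<\infty$ on the bounded set $\Omega$.
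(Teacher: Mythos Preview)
Your argument is correct. The split $a=a(0)+g$, the classical isoperimetric inequality for the constant part, the calibration field $V(x)=g(|x|)\,x/|x|$ with $\operatorname{div}V=\psi(|x|)$, the identification $\phi'(z)=\tfrac12\psi(\sqrt z)$ (so that convexity of $\phi$ is exactly monotonicity of $\psi$), and the bathtub comparison of $\int_\Omega\psi$ with $\int_{B_R}\psi$ all check out; the regularity issue you flag is genuine but your mollification remedy is standard and adequate.

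The paper does not give a full proof of this theorem; it only \emph{illustrates the idea} on the special case $a(z)=C+z^p$, $p\geq 1$, for domains starshaped with respect to the origin, and defers the general statement to the cited reference. In that sketch the boundary is parametrized as $\theta\mapsto r(\theta)(\cos\theta,\sin\theta)$, the area constraint becomes $\pi R^2=\tfrac12\int_0^{2\pi}r^2\,d\theta$, and Jensen's inequality for the convex map $s\mapsto s^{(p+1)/2}$ gives $2\pi R^{p+1}\leq\int_0^{2\pi}r^{p+1}\,d\theta\leq\int_{\partial\Omega}|x|^p\,d\sigma$; the constant $C$ is then handled by adding the classical isoperimetric inequality. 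So the two proofs share precisely the decomposition into the constant part (handled by the classical isoperimetric inequality) and the vanishing-at-zero part, but treat the latter differently: the paper's sketch uses Jensen on the polar parametrization (hence needs starshapedness), while you use a divergence/rearrangement argument that works for arbitrary Lipschitz $\Omega$ and for the full class of weights in the hypothesis. Your route is in fact the one the paper attributes, in words, to Betta--Brock--Mercaldo--Posteraro (``one adds the classical isoperimetric inequality to rather generous estimates arising from a weight which satisfies a monotonicity and convexity condition''); what it buys over the paper's Jensen sketch is exactly the removal of the starshapedness assumption and of the restriction to power weights.
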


We illustrate the idea of the proof of this theorem on the example $a(z)=C+z^p,$  $C\in\re,$ $p\geq 1$ and for a domain starshaped with respect to the origin. Under these assumptions $\delomega$ can be parametrized in the form 
$\theta\in [0,2\pi]\mapsto r(\theta)(\cos\theta,\sin\theta)$ for some $2\pi$ periodic functin $r>0.$
Using polar coordinates we get
$$
  \pi R^2=|\Omega|=\int_0^{2\pi}\int_0^{r(\theta)}s dsd\theta=\frac{1}{2}\int_0^{2\pi} r^2(\theta)d\theta.
$$
Let us first assume that $C=0.$ Using that $p\geq 1$ implies that the map $g:(0,\infty)\to \re$ given by
$ g(s)=s^{\frac{p+1}{2}}$ is convex. We therefore obtain from Jensen inequality that
\begin{equation}
  \label{eq:sketch proof of betta et alt.}
  \begin{split}
  2\pi R^{p+1}=&2\pi\left(\frac{1}{2\pi}\int_0^{2\pi} r^2(\theta)d\theta\right)^{\frac{p+1}{2}}
  \leq
  \int_0^{2\pi}r^{p+1}(\theta)d\theta 
  \smallskip \\
  \leq & \int_0^{2\pi}r^p(\theta)\sqrt{r'^2(\theta)+r^2(\theta)}d\theta
  =\int_{\delomega}|x|^pd\sigma(x).
 \end{split}
\end{equation}
If $C\neq 0,$ then one adds the classical isoperimetric inequality to \eqref{eq:sketch proof of betta et alt.} and the same proof works.
The disadvantage of this result is that all our relevant applications are for $p\in [-1,1],$ in particular the classical isoperimetric inequality.

\smallskip

We now prove Part (i) of Theorem \ref{theorem:main theorem weighted isop. inequlity}. This is the central part of the proof and the most difficult case. If we set $p=0$ everywhere in the following proof we recover the proof sketched by Mateljevic-Pavlovic \cite{Mateljevic-Pavlovic} for the classical isoperimetric inequality. Part (ii) will be deduced from (i) by fairly obvious geometric arguments and elementary inequalities, ideas which already appear in the same or similar form in \cite{Carroll-Jacob-Quinn-Walters} (Proposition 4.5) and \cite{Morgan-Pratelli} (Proposition 6.10). In what follows we can always assume, by approximation, that $\delomega$ is $C^2.$ If $U\subset \mathbb{C},$ then $\operatorname{Hol}(U)$ will denote the set of holomorphic functions in $U.$
\smallskip

\begin{proof}[Proof (Theorem \ref{theorem:main theorem weighted isop. inequlity} Part (i)).] 
\textit{Step 1.} Let us first assume that $\Omega$ is simply connected and let $\overline{\Omega}^c=\mathbb{C}\backslash\overline{\Omega}$ denote the complement of $\overline{\Omega}.$ Consider the set $D\subset \mathbb{C}$ defined by
$$
  D=\left\{\frac{1}{z}:\; z\in\overline{\Omega}^c\right\}\cup\{0\}.
$$
Note that the map $\eta(z)=1/z$ is one-to-one and $\partial\Omega$ is a simple closed curve. Therefore $\eta$ maps $\delomega$ onto a simple closed  curve $\Gamma$. By the Jordan curve theorem the interior of $\Gamma$ is a bounded simply connected set. It must coincide with $D$ since $\eta$ maps $\infty$ to $0.$ Thus $D$ is a bounded open simply connected set with $C^2$ boundary such that $0\in D.$ By the Riemann mapping theorem there exist a conformal map $h:B_1\to D$ (which extends to a $C^1$ diffeomorphism on $\overline{B}_1,$ cf. for instance Theorem 5.2.4 page 121 in Krantz \cite{Krantz Complex Analysis}) such that
$$
  h(0)=0\quad\text{ and }\quad h\in \operatorname{Hol}(B_1)\cap C^1(\overline{B}_1;\overline{D}).
$$
Define now $g\in \operatorname{Hol}(B_1\backslash\{0\})\cap C^1(\overline{B}_1\backslash\{0\})$ by
$$
  g=\frac{1}{h}:\overline{B}_1\backslash\{0\}\to \Omega^c.
$$
Since $h(0)=0,$ there exists $G\in\Hol(B_1)$ such that 
$
  h(z)=zG(z)\quad\text{ for all }z\in B_1\,.
$
Because $h'(0)\neq 0,$ we must have that $G(0)\neq 0.$ Moreover, using again that $h(0)=0$ and that $h$ is one-to-one, we get that $G(z)\neq 0$ for all $z\in B_1.$  Moreover $h(z)\neq 0$ for all $z\in\partial B_1$ and therefore 
$$
  G(z)\neq 0\quad\text{ for all }z\in\overline{B}_1\quad\text{ and }\quad
  G\in \Hol(B_1)\cap C^1(\overline{B}_1).
$$
Finally we define $Q\in \Hol(B_1)\cap C^1(\overline{B}_1)$ and $\lambda\in\mathbb{C}$ by
$$
  Q(z)=\frac{1}{G(z)}\quad\text{ and }\quad \lambda=Q(0).
$$
Note that $\lambda\neq 0.$
In view of the above definitions, there exists a holomorphic function $P=(Q-\lambda)/z\in \Hol(B_1)$ such that
\begin{equation}
  \label{eq:proof:def of P}
   g(z)=\frac{1}{h(z)}=\frac{Q(z)}{z}=\frac{\lambda}{z}+P(z)\quad\text{ for all }z\in B_1\,.
\end{equation}
\smallskip

\textit{Step 2.} We will prove in this step that
\begin{equation}
 \label{eq:proof: omega volume smaller the pi lambda2}
  |\Omega|\leq \pi|\lambda|^2.
\end{equation}

\textit{Step 2.1.} The map $h:\partial B_1$ to $\partial D$ is one-to-one and onto. The same holds for the map $\eta(z)=1/z,$ which maps $\partial D$ to $\delomega.$ Therefore the curve $\gamma(t)=g(e^{-it}),$ $t\in[0,2\pi],$ 
is a parametrization of $\delomega.$ Since $\eta$ inverses the orientation of the curve, we have taken $e^{-it}$ (instead of $e^{it}$) and therefore $|\Omega|$ computes as
$$
  |\Omega|=\frac{1}{2}\int_{\Omega}\operatorname{div}(x_1,x_2)dx
  =\frac{1}{2}\int_0^{2\pi} \left(\gamma_1(t)\gamma_2'(t)-\gamma_2(t)\gamma_1'(t)\right) dt,
$$
where $\gamma_1(t)=\Real(\gamma(t))$ and $\gamma_2(t)=\Imag(\gamma(t)).$ Since $\gamma$ is not contained in the open set $B_1$ we do an approximation and define $\gamma_r(t)=g(re^{-it}),$ where $0<r<1,$ and define
$$
  A_r=\frac{1}{2}\int_0^{2\pi} \left((\gamma_r)_1(t)(\gamma_r)_2'(t)-(\gamma_r)_2(t)(\gamma_r)_1'(t)\right) dt.
$$
Since $g\in C^1(\overline{B}_1\backslash\{0\}),$ we have that
\begin{equation}
\label{eq:proof:Ar goes to omega}
  \lim_{r\to 1}A_r=|\Omega|.
\end{equation}

\textit{Step 2.2.} In this Step we will estimate $A_r$. We have by definition of $\gamma_r(t)$
$$
  \gamma_r(t)=g\left(r e^{-it}\right)\quad\text{ and }\quad
  \gamma_r'(t)=-ig'\left(r e^{-it}\right)re^{-it}.
$$
Therefore if we set 
\begin{align*}
  z=&g\left(r e^{-it}\right)=\frac{\lambda}{r}e^{it}+P\left(r e^{-it}\right)
  \smallskip \\
  w=&-g'\left(r e^{-it}\right)r^{-it} =\frac{\lambda}{r}e^{it}-r e^{-it}P'\left(r e^{-it}\right),
\end{align*}
and use the formula:
$
  \Real(z)\Imag(iw)-\Imag(z)\Real(iw)=\Real(z\overline{w}),
$
we get that 
\begin{equation}
 \label{eq:proof:Ar in terms of z and w}
  A_r=\frac{1}{2}\int_{0}^{2\pi}\Real(z\overline{w})dt
  =\frac{1}{2}\Real\int_{0}^{2\pi}z\overline{w}\,dt.
\end{equation}
Since $P$ is holomorphic in $B_1,$ we can write $P$ as
$
  P(z)=\sum_{n=0}^{\infty}a_nz^n,
$
where the series converges absolutely and uniformly on every $\overline{B}_r,$ and thus in particular on $\partial B_r$. If we define $a_{-1}=\lambda,$ then $z$ and $\overline{w}$ can be written as
\begin{align*}
  z=&\frac{\lambda}{r}e^{it}+\sum_{n=0}^{\infty}a_nr^ne^{-int}= \sum_{n=-1}^{\infty}a_nr^ne^{-int}
  \smallskip \\
  \overline{w}=&\frac{\overline{\lambda}}{r}e^{-it}-\sum_{m=1}^{\infty} m\overline{a}_mr^m e^{imt} 
  =-\sum_{m=-1}^{\infty} m\overline{a}_mr^m e^{imt}.
\end{align*}
We therefore obtain that
\begin{align*}
  \int_0^{2\pi}(z\overline{w})dt=& -\int_0^{2\pi}\sum_{n,m=-1}^{\infty}m a_n\overline{a}_m r^{n+m}e^{i(m-n)t} dt 
  \smallskip \\
  =&-\sum_{n,m=-1}^{\infty}m a_n\overline{a}_m r^{n+m} \int_0^{2\pi}e^{i(m-n)t} dt
  = -2\pi\sum_{n=-1}^{\infty}n|a_n|^2 r^{2n}.
\end{align*}
We recall that $a_{-1}=\lambda$ and set the previous equation into \eqref{eq:proof:Ar in terms of z and w} to get
\begin{equation}
 \label{eq:proof of main theorem. Final Ar}
  A_r=-\pi \sum_{n=-1}^{\infty}n|a_n|^2 r^{2n}
  =\pi\left(\frac{|\lambda|^2}{r^2}-\sum_{n=1}^{\infty}n|a_n|^2 r^{2n}\right)
  \leq\frac{\pi |\lambda|^2}{r^2}.
\end{equation}
Hence this inequality with \eqref{eq:proof:Ar goes to omega} and letting $r\to 1$ proves the claim of Step 2.
\smallskip

\textit{Step 3.} We will prove in this step that
$$
  |\lambda|^{p+1}\leq \frac{1}{2\pi}\int_{\delomega}|x|^p d\sigma(x).
$$
As in Step 2, we get that $t\in[0,2\pi]\to\alpha(t)=g(e^{it})$ is a parametrization of $\delomega.$ Using that $|\alpha'(t)|=|g'(e^{it})ie^{it}|=|g'(e^{it})|,$ gives
$$
  \int_{\delomega}|x|^pd\sigma(x)=\int_0^{2\pi}\left|g'\left(e^{it}\right)\right| \,\left|g\left(e^{it}\right)\right|^p dt.
$$
As in Step 2, we set $\alpha_r(t)=g(re^{it})$ for $0<r<1.$ Observe that, since $g\in C^1(\overline{B}_1\backslash\{0\}),$ we get that
\begin{equation}
 \label{eq:proof:implicit def on Sr}
 \int_{\delomega}|x|^p d\sigma(x)=\lim_{r\to 1} S_r,
\end{equation}
where 
$$
  S_r=\int_0^{2\pi}\left|\alpha_r'(t)\right| \,\left|\alpha_r(t)\right|^pdt
  =
  r\int_0^{2\pi}\left|g'\left(re^{it}\right)\right| \,\left|g\left(re^{it}\right)\right|^pdt.
$$
Let us define $u,$ using \eqref{eq:proof:def of P}, by $u(z)=z^2g'(z)=zQ'(z)-Q(z).$ Note that 
$u\in \Hol(B_1)$ and $u(0)=\lambda.$
Since $B_1$ is simply connected and $Q$ does not vanish on $B_1$, there exists a holomorphic logarithm $\varphi$ of $Q,$ that is
$$
  \varphi\in \Hol(B_1)\quad\text{ and }\quad Q(z)=\exp(\varphi(z))\quad\text{ for all }z\in B_1\,.
$$
In particular the function $\tau$ defined by
$
  \tau(z)=u(z)\exp(p\varphi(z))
$
is a holomorphic function in $B_1$. We can therefore apply the Cauchy mean value integral formula
\begin{equation}\label{eq:tau at zero and cauchy formula}
  \tau(0)=\frac{1}{2\pi}\int_0^{2\pi}u\big(r e^{it}\big) \exp\big(p\varphi\big(r e^{it}\big)\big) dt.
\end{equation}
Note that for any $z\in\mathbb{C}$ and $p\in\re$ the identiy $|\exp(pz)|=|\exp(z)|^p$ holds true. The previous equality leads to the estimate
\begin{equation}
 \label{eq:proof:estimate of tau 0} 
  |\tau(0)|\leq \frac{1}{2\pi}\int_0^{2\pi}\big|u\big(re^{it}\big)\big|\,\big|Q\big(r e^{it}\big)\big|^p dt.
\end{equation}
From the definitions of $Q,$ $u$ and $\tau$ we get the following three identities
\begin{align*}
  |\tau(0)|=|u(0)|\,|Q(0)|^p=|\lambda|^{p+1}\quad&\text{ and }\quad
  \big|u\big(r e^{it}\big)\big|= r^2\big|g'\big(r e^{it}\big)\big|
  \smallskip \\
  \big|Q\big(re^{it}\big)\big|=&\big|re^{it}\big|\,\big|g\big(re^{it}\big)\big|
  =r\big|g\big(re^{it}\big)\big|
\end{align*}
Plugging these identities into \eqref{eq:proof:estimate of tau 0} yields
$$
  |\lambda|^{p+1}=|\tau(0)|\leq \frac{r^{p+2}}{2\pi}
  \int_0^{2\pi}\left|g'\left(re^{it}\right)\right| \,\left|g\left(re^{it}\right)\right|^pdt=\frac{r^{p+1}}{2\pi}S_r\,.
$$
Finally, letting $r\to 1$ and using \eqref{eq:proof:implicit def on Sr} proves the claim of Step 3.

\smallskip 

\textit{Step 4.} Sinc $p\geq-1,$ Steps 2 and 3 imply that
\begin{equation}\label{eq:main thm proof step 4}
  \left(\frac{|\Omega|}{\pi}\right)^{\frac{p+1}{2}}
  \leq |\lambda|^{p+1} \leq
  \frac{1}{2\pi}\int_{\delomega}|x|^p d\sigma(x),
\end{equation}
which proves the Part (i) the theorem in the case that $\Omega$ is simply connected. If $\Omega$ is not simply connected, 
then there exists an integer $m$ and simply connected open bounded sets $\Omega_i$, $i=0,\ldots,m,$ such that for $i=1,\ldots,m,$ the sets $\overline{\Omega}_i$ are disjoint and $\overline{\Omega}_i\subset\Omega_0,$  $0\in\Omega_0$ and
$$
  \Omega=\Omega_0\backslash \left(\bigcup_{i=1}^m \overline{\Omega}_i\right).
$$
Let $|\Omega_0|=\pi R_0^2$ and $|\Omega|=\pi R^2.$ Using Part (i) and that $p+1\geq 0$ we obtain that 
$$
  \int_{\delomega}|x|^pd\sigma(x)\geq\int_{\delomega_0}|x|^pd\sigma(x)\geq 2\pi R_0^{p+1}\geq 2\pi R^{p+1}.
$$
This proves Part (i) of the theorem.
\end{proof}
\smallskip

We now turn to the proof of Part (ii). For the case $0\notin\Omega$ the idea is very simple and consists roughly speaking of the following: If $p\geq0,$ then $|x|^p$ decreases for all $x\in\delomega,$ if $\Omega$ is shifted closer to the origin in an appropriate way. We introduce the following notation: let $a,b\in\re^2$ and
\begin{align*}
 [a,b]=&\left\{x\in\re^2;\, x=\lambda a+(1-\lambda) b,\;0\leq\lambda\leq 1\right\}
 \smallskip \\
 \overline{ab}=&\left\{x\in\re^2;\, x=\lambda a+(1-\lambda) b,\; \lambda\in\re\right\}.
\end{align*}
We will use the following simple lemma.

\begin{lemma}
\label{lemma:p bigger 0 straight line minimizes}
Let $a,b\in \re^2$ be such that $0\in\overline{ab}$ and assume that $p\geq 0.$ Then 
$$
  \min\left\{\int_{\gamma}|x|^pd\sigma;\,\gamma\in C^1\left([0,1],\re^2\right),\;\gamma(0)=a,\,\gamma(1)=b\right\}=\int_{[a,b]}|x|^pd\sigma.
$$
\end{lemma}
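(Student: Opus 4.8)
The plan is to reduce the problem to a one–variable calculus statement by working in polar coordinates adapted to the line $\overline{ab}$. Since $0\in\overline{ab}$, after a rotation we may assume $a$ and $b$ lie on the $x_1$–axis, say $a=(a_1,0)$ and $b=(b_1,0)$. Write a competitor curve as $\gamma(t)=(\gamma_1(t),\gamma_2(t))$; then $|\gamma(t)|^p=(\gamma_1(t)^2+\gamma_2(t)^2)^{p/2}\ge |\gamma_1(t)|^p$ pointwise, and $|\gamma'(t)|=\sqrt{\gamma_1'(t)^2+\gamma_2'(t)^2}\ge |\gamma_1'(t)|$ pointwise. Multiplying these two inequalities gives
\begin{equation}
\label{eq:lemma proof pointwise}
|\gamma(t)|^p\,|\gamma'(t)|\ge |\gamma_1(t)|^p\,|\gamma_1'(t)|.
\end{equation}

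Next I would integrate \eqref{eq:lemma proof pointwise} over $[0,1]$ and recognize the right-hand side as a one–dimensional arclength integral for the scalar path $t\mapsto\gamma_1(t)$ joining $a_1$ to $b_1$ on $\re$ with density $|s|^p$. The key remaining point is that this scalar integral is minimized by the monotone (straight) path, with value $\int_{[a,b]}|x|^p\,d\sigma=\big|\int_{a_1}^{b_1}|s|^p\,ds\big|$. This follows because if $F$ is any antiderivative of $s\mapsto |s|^p$ (which is continuous since $p\ge 0$, and can be taken odd, $F(s)=\operatorname{sgn}(s)|s|^{p+1}/(p+1)$), then
$$
\int_0^1 |\gamma_1(t)|^p\,|\gamma_1'(t)|\,dt\ge \left|\int_0^1 F'(\gamma_1(t))\gamma_1'(t)\,dt\right|=|F(b_1)-F(a_1)|=\int_{[a,b]}|x|^p\,d\sigma,
$$
the last equality because $0\in\overline{ab}$ means $0\in[a_1,b_1]$ (or the segment does not straddle $0$), so the straight segment's weighted length is exactly $|F(b_1)-F(a_1)|$ — here it is essential that $F$ is the odd antiderivative so that $|F(b_1)-F(a_1)|$ accounts correctly for a sign change at $0$. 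Chaining this with the integrated form of \eqref{eq:lemma proof pointwise} shows every competitor has weighted length at least that of $[a,b]$, and the segment itself attains it, giving the claimed minimum.

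The main obstacle is the small bookkeeping around the origin: when $0\in[a_1,b_1]$ strictly, the "straight" segment $[a,b]$ passes through $0$ and one must check that $\int_{[a,b]}|x|^p\,d\sigma$ really equals $|F(b_1)-F(a_1)|$ with the odd $F$, i.e. that $\int_{a_1}^{0}|s|^p\,ds+\int_{0}^{b_1}|s|^p\,ds=|F(b_1)-F(a_1)|$; this is where the hypothesis $0\in\overline{ab}$ is used and where one could slip if using a non-odd antiderivative. A secondary technical point is that the infimum over $C^1$ curves is actually attained — but this is immediate since $[a,b]$ is itself (a reparametrization of) a $C^1$ curve from $a$ to $b$, so "$\min$" is justified once the lower bound is established. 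No compactness or existence theory is needed; the whole argument is the two pointwise inequalities plus the fundamental theorem of calculus.
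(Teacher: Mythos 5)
Your proof is correct and follows essentially the same route as the paper, which simply states that projecting any competitor curve onto the line $\overline{ab}$ decreases the integral and omits the details. You supply exactly those omitted details: the two pointwise inequalities $|\gamma(t)|^p\geq|\gamma_1(t)|^p$ and $|\gamma'(t)|\geq|\gamma_1'(t)|$ (the first being where both $p\geq 0$ and $0\in\overline{ab}$ are used), followed by the fundamental theorem of calculus with the odd antiderivative of $s\mapsto|s|^p$.
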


\begin{proof}
Projecting any curve $\gamma$ onto $\overline{ab}$ gives a new curve which decreases the integral. The proof is elementary and we omit the details.
\end{proof}

\smallskip

\begin{proof}[Proof (Theorem \ref{theorem:main theorem weighted isop. inequlity} Part (ii)).] \textit{Step 1.}
Note that if $p\geq 0,$ the map $x\mapsto|x|^p$ is continuous. We thus obtain from Part (i) and by a continuity argument that \eqref{theorem:eq:main isop. ineq.} holds
for all $\Omega$ such that $0\in\Omegabar.$ Moreover, as in the proof of Part (i) Step 4, we can assume that $\Omega$ is simply connected.
\smallskip

\textit{Step 2.} We show in this step that we can drop the assumption $0\in\Omega.$ So suppose that $0\notin\Omegabar,$ but we still assume that $\Omega$ is connected. Then we can also assume, as in the proof of Part (i) Step 4, that $\Omega$ is simply connected. Define $E=\operatorname{conv}(\Omegabar)$ as the convex hull of $\Omegabar.$ We now distinguish two cases.
\smallskip

\textit{Case 1: $0\notin E.$} By the definition of $E$ we can assert the existence of $x_0\in\partial E,$ $y_1,y_2\in\delomega$ and $0\leq\lambda\leq 1$ with the following properties
\begin{itemize}
 \item[(i)] $|x_0|=\min_{y\in E}|y|$ and $x_0=\lambda y_1+(1-\lambda)y_2$.
 \item[(ii)] $\overline{y_1y_2}$ is a separtating hyperplane for $\{0\}$ and $E,$ i.e.
$
   \langle x_0;x-x_0\rangle\geq 0$ for all $x\in E.
$
\end{itemize}
We have assumed $y_1\neq y_2,$ otherwise we can take $x_0\in\delomega$ and the following proof simplifies considerably, as can be easily verified.
It follows from property (ii), that $|x-x_0|\leq |x|$ for all $x\in E$ and therefore
\begin{equation}
 \label{eq:proof:shifting arg. x norm power p decreases}
  |x-x_0|^p\leq |x|^p\quad\text{ for all }x\in E.
\end{equation}
Since $\Omega$ is simply connected, $\delomega$ is the image of an oriented simple closed curve $\gamma.$ The points $y_1,y_2$ split $\gamma$ into two curves $\gamma_1$ and $\gamma_2$ which do not intersect (except at $y_1,y_2$), such that $\gamma=\gamma_1+\gamma_2$ and have the properties (reparametrizing, if necessary)
\begin{align*}
 \gamma_1(0)=y_1,\;\gamma_1(1)=y_2,&\qquad \gamma_2(0)=y_2,\;\gamma_2(1)=y_1\,.
\end{align*}
By abuse of notation, let us consider $[y_1,y_2]$ as an oriented curve. Then
the two closed simple curves $\gamma_1-[y_1,y_2]$ and $\gamma_2+[y_1,y_2]$ both bound a simply connected bounded set, let's say $\Omega_1$ and $\Omega_2$. Since $\overline{y_1y_2}$ is a separating hyperplane and $\gamma_1$ and $\gamma_2$ do not intersect, it follows that $\Omega$ must be contained in one of the $\Omega_1$ or $\Omega_2$ (It is enough to note that $[y_1,y_2]$ and $\gamma_1$ both join $y_1$ and $y_2$. So if there is a point $\gamma_2(t),$ $t\in(0,1),$ which lies in $\Omega_1\cup[y_1,y_2],$ then the whole curve $\gamma_2$ has to lie in $\Omega_1\cup[y_1,y_2].$) Let us assume that
$
  \Omega\subset\Omega_2,$ and in particular $|\Omega|\leq|\Omega_2|.$
Let us define $\Omega_2'=\Omega_2-x_0$ and $\Omega'=\Omega-x_0$.
It now follows from Step 1 applied to $\Omega_2',$ from Lemma 
\ref{lemma:p bigger 0 straight line minimizes} and from \eqref{eq:proof:shifting arg. x norm power p decreases} that
\begin{align*}
 \left(\frac{|\Omega|}{\pi}\right)^{\frac{p+1}{2}}\leq \left(\frac{|\Omega_2'|}{\pi}\right)^{\frac{p+1}{2}}\leq \int_{\partial \Omega_2'}|x|^pd\sigma 
 \leq \int_{\partial\Omega'}|x|^pd\sigma \leq\intdelomega |x|^p d\sigma.
\end{align*}
This proves the claim in the present case.
\smallskip 

\textit{Case 2: $0\in E.$} The argument is very similar to that of Case 1 and we omit the details: Since $0\in E,$ there exists $y_1,y_2\in\partial\Omega$ such that $0\in[y_1,y_2]$ and $(y_1,y_2)\cap\Omegabar=\emptyset$ (where $(y_1,y_2)$ is the open line segment). We define $\gamma_1,\gamma_2,$ respectively $\Omega_1,\Omega_2$ as in Case 1. Using that neither $\gamma_1$ nor $\gamma_2$ intersect $(y_1,y_2),$ that $\gamma_1$ and $\gamma_2$ do not intersect and that $0\notin\Omegabar,$ one easily deduces that $\Omega$ must be contained in $\Omega_1$ or $\Omega_2$. We then argue exactly as in Case 1, whereby we set $x_0=0,$ i.e. there is no need to shift the domain to origin.

\smallskip
\textit{Step 3.} We now show that we can also drop the assumption that $\Omega$ is connected. In view of Theorem \ref{theorem:Betta et alt} we can assume that $0\leq p\leq 1.$ Let $\Omega_i,$ $i=1,\ldots,l$ denote the connected components of $\Omega,$ and $R,R_i>0$ be such that $\pi R_i^2=|\Omega_i|,$ respectively $\pi R^2=|\Omega|.$ We know from Step 2 that for each $i=1,\ldots,l$
\begin{equation}
 \label{eq:eq true for disjoint components}
  2\pi R_i^{p+1}\leq\int_{\delomega_i}|x|^p d\sigma.
\end{equation}
It follows from $|\Omega|=\sum_{i=1}^l|\Omega_i|$ that
$
  R^{p+1}=\left(R_1^2+\cdots +R_l^2\right)^{\frac{p+1}{2}}.
$
The result now follows from \eqref{eq:eq true for disjoint components} and the inequality
\begin{equation}\label{eq:inequality thm 19 in HardyLiPo.}
  \left(R_1^2+\cdots +R_l^2\right)^{\frac{p+1}{2}}\leq \left(R_1^{p+1}+\cdots+R_l^{p+1}\right),
\end{equation}
see for instance \cite{HardyLittlewoodPolya} Theorem 19 page 28, where we have used that $0\leq p\leq 1.$ 
\end{proof}

\smallskip

We now conclude the proof of the main theorem.

\begin{proof}[Proof (Theorem \ref{theorem:main theorem weighted isop. inequlity} Part (iii) and (iv).] 
\textit{Step 1 (Proof of Part (iii)).} 
Step 4 in the proof of Part (i) shows that we cannot have equality if $\Omega$ is not simply connected. Thus we assume that $\Omega$ is simply connected.
If there is equality in the theorem, then  we must have equality in both of the inequalities in \eqref{eq:main thm proof step 4}. We obtain from the first one, equations \eqref{eq:proof:Ar goes to omega} and \eqref{eq:proof of main theorem. Final Ar} that
$$
  \lim_{r\to 1}\sum_{n=1}^{\infty} n |a_n|^2 r^{2n}=0.
$$
Therefore $a_n=0$ for all $n\geq 1,$ $Q(z)=\lambda+a_0 z$  and
$$
  g(z)=\frac{\lambda+a_0 z}{z}:\partial B_1\to \partial\Omega.
$$
It is easy to check that $g$ is a M\"obius transformation which sends the unit circle to a circle with center $a_0$ and radius $|\lambda|.$ This proves that $\Omega$ must be a ball. Note that $|\lambda|>|a_0|,$ since by assumption $0\in\Omega.$ It remains to show that $a_0=0,$ if $p\neq 0.$

We now use that also the second inequality in \eqref{eq:main thm proof step 4} must be an equality. Since $Q(z)=\lambda+a_0 z$ and $u(z)=-\lambda$ are holomorphic in $\mathbb{C}$ we have that \eqref{eq:proof:estimate of tau 0} holds now also for $r=1,$ and we must have equality. We therefore obtain from \eqref{eq:tau at zero and cauchy formula} and \eqref{eq:proof:estimate of tau 0} that the following equalities must hold
$$
  |\lambda|^{p+1}=|\tau(0)|=\frac{|\lambda|}{2\pi}\left|\int_0^{2\pi}\exp(p\varphi(e^{it}))dt\right| =\frac{|\lambda|}{2\pi}\int_0^{2\pi}|\exp(p\varphi(e^{it}))|dt.
$$
Defining $f(t)=\exp(p\varphi(e^{it})),$ the previous equality implies that 
$$
  \left|\int_0^{2\pi}f(t)dt\right|=\int_0^{2\pi}|f(t)|dt.
$$
This is only possible if there is a function $\phi:[0,2\pi]\to\re$ and two constants $c_1,c_2\in\re,$ such that $\Real(f(t))=c_1\phi(t)$ and $\Imag(f(t))=c_2\phi(t).$ (cf. for instance \cite{HardyLittlewoodPolya}, Theorem 201 page 148 applied to $f_1=\Real (f)$ and $f_2=\Imag (f)$). We thus get that $\varphi$ has to satisfy the two equations, defining $c=c_1+i c_2,$
$$
  \exp(p\varphi(e^{it}))=c\phi(t)\quad\text{ and }\quad\exp(\varphi(e^{it}))=\lambda+a_0 e^{it}.
$$
We multiply the second equation by $p$ and obtain by deriving the two equations
$$
  \frac{\phi'(t)}{\phi(t)}=p\varphi'(e^{it})i e^{it}=ip\frac{a_0 e^{it}}{\lambda+ a_0e^{it}}.
$$
Thus if $p\neq 0$ we must have that
$$
  \frac{a_0 e^{it}}{\lambda+a_0 e^{it}}=\in i\re\quad\text{ for all }t\in[0,2\pi].
$$
This is only possible if $a_0=0.$ Recall that the image of $\partial B_1$ under the M\"obius transformation $g$ is a circle and the image of that circle (containing the origin) under the map $z\to 1/z$ cannot be line.
\smallskip

\textit{Step 2 (Proof of Part (iv)).}
We can again assume that $p<1,$ by infering to \cite{Betta and alt.} Theorem 4.3, where it is proven for the case $p\geq 1$ that $\Omega$ has to be a ball centered at the origin.
Note first that if $0\leq p<1,$ then we have equality in \eqref{eq:inequality thm 19 in HardyLiPo.} if and only if $l=1$ (see e.g. \cite{HardyLittlewoodPolya} Theorem 19). Thus $\Omega$ has to be connected. Moreover, as in (ii), we must have that $\Omega$ must be simply connected. We now distinguish two cases:
If $0\in\Omega,$ then the result follows from Part (iii).
If $0\notin\Omegabar,$ then Step 2 in the proof Theorem \ref{theorem:main theorem weighted isop. inequlity} Part (ii), shows that there exists a domain $\Omega_2'\neq \Omega,$ which has piecewise $C^1$ boundary and  the properties
$$
  |\Omega|<|\Omega_2'|\quad\text{ and }\quad \int_{\delomega_2'}|x|^pd\sigma\leq \int_{\delomega} |x|^pd \sigma.
$$
We therefore cannot have equality in \eqref{theorem:eq:main isop. ineq.} for $\Omega.$
\end{proof}

\section{Applications: Hardy-Sobolev Inequality and Flucher's Estimate for $|\Omega|$}

In what follows $C_c^{\infty}(\re^2)$ shall denote the space of smooth functions with compact support. 
It is known, see for instance Caffarelli-Kohn-Nirenberg \cite{Caffarelli-Kohn-Nirenberg} that there exists a constant $C>0$ such that
\begin{equation}
 \label{section intro. Caff-Kohn-Ni ineq.}
  \|u\,|x|^{\gamma}\|_{L^r}\leq C\|Du\,|x|^{\alpha}\|_{L^1}\quad\text{ for all }u\in C_c^{\infty}(\re^2),
\end{equation}
if and only if $\alpha,\gamma$ and $r$ satisfy the conditions
\begin{equation}
 \label{eq:conditions in Caff-Kohn-Ni ineq.}
  r>0,\quad 0\leq\alpha-\gamma\leq 1,\quad 0<\frac{1}{r}+\frac{\gamma}{2}=\frac{\alpha+1}{2}.
\end{equation}
If $\Omega\subset\re^2$ is some smooth set we can take $u_i$ as a smooth approximation of the characteristic function $\chi_{\Omega}$ and obtain by a limiting process (or alternatively generalize \eqref{section intro. Caff-Kohn-Ni ineq.} to functions of bounded variation) that
$$
  \left(\int_{\Omega}|x|^{\gamma r}dx\right)^{\frac{1}{r}}\leq C\intdelomega |x|^{\alpha}d\sigma(x).
$$
We set $p=\alpha$ and $q=\gamma r,$ which satisfy, in view of \eqref{eq:conditions in Caff-Kohn-Ni ineq.},
$$
  0<2+q=r(p+1),\qquad 0\leq pr-q\leq r.
$$
From the first condition we immediately get that $q>-2$ and $p>-1.$ Moreover solving the first condition for $r$ and setting into the second one gives that $p-1\leq q\leq 2 p.$ Thus we have obtained that any $\Omega$ satisfies 
\begin{equation}
 \label{section intro Hardy-Sob. twice weighted isop. ineq.}
  \left(\int_{\Omega}|x|^q dx\right)^{\frac{p+1}{q+2}}\leq C\intdelomega |x|^pd\sigma(x)
\end{equation}
if
\begin{equation}
 \label{section intro Hardy-Sob. conditions on p and q.}
  -2<p-1\leq q\leq 2p.
\end{equation}
Note that the constant $C$ is the same as in \eqref{section intro. Caff-Kohn-Ni ineq.} and also that the conditions $p>-1$ and $q>-2$ imply that the integrals are finite for any domain $\Omega\subset\re^2.$ The next proposition shows that the converse is also true if $r\geq 1.$ 

\begin{proposition}
\label{proposition:equiv. Hardy sob and isop. twice weighted.}
Let $p,q\in\re$ be such that $p>-1,$ $q>-2$ and
$$
  \frac{q+2}{p+1}=r\geq 1.
$$
Suppose that there exists a constant $C$ such that \eqref{section intro Hardy-Sob. twice weighted isop. ineq.} holds for all bounded open smooth sets $\Omega\subset\re^2.$ Then the following inequality holds
$$
  \|u\,|x|^{\gamma}\|_{L^r}\leq C\|Du\,|x|^{\alpha}\|_{L^1}\quad\text{ for all }u\in C_c^{\infty}(\re^2),
$$
where $\gamma=q/r,$ $\alpha=p$ and the constant $C$ is the same as in \eqref{section intro Hardy-Sob. twice weighted isop. ineq.}.
\end{proposition}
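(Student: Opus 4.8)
The plan is to run the classical coarea-formula argument of Federer--Fleming \cite{Federer-Fleming} and Fleming--Rishel \cite{Fleming-Rishel} that deduces a Sobolev-type inequality from an isoperimetric-type inequality, now carrying the weight $|x|^{\alpha}$ through every step, and replacing the ordinary triangle inequality by Minkowski's integral inequality at the one point where the hypothesis $r\geq 1$ is needed.

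First I would reduce to $u\geq 0$: passing from $u$ to $|u|$ changes neither $\|u\,|x|^{\gamma}\|_{L^r}$ nor $\|Du\,|x|^{\alpha}\|_{L^1}$, since $|\nabla |u||=|\nabla u|$ a.e., and $|u|$ is a nonnegative Lipschitz function with compact support. For $t>0$ put $\Omega_t=\{x\in\re^2:\,|u(x)|>t\}$, a bounded open set with $\partial\Omega_t\subseteq\{|u|=t\}$. By Sard's theorem, for a.e.\ $t>0$ both $t$ and $-t$ are regular values of $u$, so $\{|u|=t\}$ is a smooth compact curve disjoint from $\{u=0\}$ and $\Omega_t$ is a bounded open smooth set with $\partial\Omega_t=\{|u|=t\}$; hence the hypothesis \eqref{section intro Hardy-Sob. twice weighted isop. ineq.} is applicable to $\Omega_t$ for a.e.\ $t$. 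Applying the coarea formula to the Lipschitz function $|u|$ with the nonnegative Borel weight $g(x)=|x|^{\alpha}$ (the weight is unbounded near $0$ when $\alpha<0$, but it is still nonnegative and Borel, so the coarea formula holds by monotone convergence, and along a.e.\ level curve the integral is finite because $\alpha=p>-1$) gives
$$
  \int_{\re^2}|Du(x)|\,|x|^{\alpha}dx=\int_0^{\infty}\left(\int_{\partial\Omega_t}|x|^{\alpha}d\sigma(x)\right)dt.
$$
Now for a.e.\ $t$ I invoke \eqref{section intro Hardy-Sob. twice weighted isop. ineq.} with $p=\alpha$, $q=\gamma r$, and exponent $(p+1)/(q+2)=1/r$, obtaining
$$
  \int_{\partial\Omega_t}|x|^{\alpha}d\sigma(x)\geq\frac{1}{C}\left(\int_{\Omega_t}|x|^{\gamma r}dx\right)^{1/r},
$$
and integrating in $t$ yields $\,C\int_{\re^2}|Du|\,|x|^{\alpha}dx\geq\int_0^{\infty}\mu(t)^{1/r}dt$, where $\mu(t)=\nu(\Omega_t)$ is the distribution function of $|u|$ with respect to the measure $d\nu=|x|^{\gamma r}dx=|x|^q dx$.

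Finally, using the layer-cake representation $|u(x)|=\int_0^{\infty}\chi_{\Omega_t}(x)\,dt$ together with Minkowski's integral inequality in $L^r(\nu)$ --- this is exactly where $r\geq 1$ is used --- and the identity $\|\chi_{\Omega_t}\|_{L^r(\nu)}=\nu(\Omega_t)^{1/r}=\mu(t)^{1/r}$, I conclude
$$
  \left\|u\,|x|^{\gamma}\right\|_{L^r}=\left\|\int_0^{\infty}\chi_{\Omega_t}\,dt\right\|_{L^r(\nu)}\leq\int_0^{\infty}\left\|\chi_{\Omega_t}\right\|_{L^r(\nu)}dt=\int_0^{\infty}\mu(t)^{1/r}dt\leq C\int_{\re^2}|Du|\,|x|^{\alpha}dx,
$$
which is the asserted inequality with the same constant $C$. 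The step deserving the most care is this last one: for $r<1$ Minkowski's inequality fails, which is consistent with the hypothesis $r\geq 1$ and is the reason the method is limited to that range; the coarea/Sard bookkeeping (admissibility of $\Omega_t$ for a.e.\ $t$, and integrability of $|x|^{\alpha}$ along the level curves, which holds since $\alpha>-1$) is routine and I would not belabor it.
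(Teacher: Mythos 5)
Your proof is correct and is essentially the same argument as the paper's: both rest on the layer-cake representation, Minkowski's integral inequality (the sole point where $r\geq 1$ enters), the hypothesis \eqref{section intro Hardy-Sob. twice weighted isop. ineq.} applied to the level sets $\Omega(t)$, and the coarea formula. The only cosmetic differences are that you phrase Minkowski's inequality in $L^r(d\nu)$ with $d\nu=|x|^q\,dx$ rather than carrying the weight explicitly, and you spell out the Sard-theorem bookkeeping that the paper leaves implicit.
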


In the proof we follow Struwe \cite{Struwe} page 43, generalizing thereby the case $p=q=0.$ Note that if $q=0,$ the Minkowsky inequality in the below proof is presicely valid for $p\in(-1,1].$  We use the notation: if $u\in C^{\infty}_c(\re^2),$ then $\Omega(t)=\Omega_u(t)=\{x\in\re^2;\,|u(x)|>t\}$ shall denote the level sets of $u.$ 
\smallskip

\begin{proof}
Let $\chi_{\Omega(t)}:\re^2\to \{0,1\}$ denote the characteristic function of $\Omega(t).$ 
Let $M=\max|u|.$ Then we can write for all $x\in\re^2$
$$
  |u(x)|=\int_0^M\chi_{\Omega(t)}(x) dt.
$$
Thus we obtain that
$$
  \|u\,|x|^{\gamma}\|_{L^r}=\left(\int_{\re^2}\left(\int_0^M\chi_{\Omega(t)}(x)|x|^{\gamma}dt\right)^rdx\right)^{\frac{1}{r}}.
$$
Using that $r\geq 1$ we can apply the Minkowsky inequality (see for instance Hardy Littlewood and P\'olya \cite{HardyLittlewoodPolya}, Theorem 202, page 148), and we get
\begin{align}
 \label{eq:proof imbedding:using minkowski ineq}
  \|u\,|x|^{\gamma}\|_{L^r}\leq\int_0^M\left(\int_{\re^2}\chi_{\Omega(t)}(x)|x|^{\gamma r}dx\right)^{\frac{1}{r}}dt
  =\int_0^M\left(\int_{\Omega(t)}|x|^q dx\right)^{\frac{p+1}{q+2}}dt.
\end{align}
We obtain from the hypothesis, namely inequality \eqref{section intro Hardy-Sob. twice weighted isop. ineq.}, that
\begin{equation}
 \label{eq:proof of imbedding:using isop ineq twice weighted.}
  \left(\int_{\Omega(t)}|x|^q dx\right)^{\frac{p+1}{q+2}}\leq
  C\int_{\partial\Omega(t)}|x|^{\alpha}d\sigma=C\int_{\{|u|=t\}}|x|^{\alpha}d\sigma.
\end{equation}
We substitute this inequality into the previous one
$$
  \|u\,|x|^{\gamma}\|_{L^r}\leq  C\int_0^M\left(\int_{\{|u|=t\}}|x|^{\alpha}d\sigma(x)\right)dt= C\int_0^{\infty}\left(\int_{\{|u|=t\}}|x|^{\alpha}d\sigma(x)\right)dt.
$$
We now apply the coarea formula to get
$$
  \|u\,|x|^{\gamma}\|_{L^r}\leq C \int_{\re^2}|\nabla|u(x)||\,|x|^{\alpha}dx=C \int_{\re^2}|\nabla u(x)|\,|x|^{\alpha}dx,
$$
which proves the proposition.
\end{proof}
\smallskip

We now apply Theorem \ref{theorem:main theorem weighted isop. inequlity} and the previous proposition to obtain the following imbedding with best constant. This is just one example: one can now easily obtain the best constant in other Hardy-Sobolev inequalities in exactly the same way, for instance using \cite{Diaz-Harman-Howe-Thompson} Proposition 4.21, instead of Theorem \ref{theorem:main theorem weighted isop. inequlity}.

\begin{theorem}
\label{theorem:sobolev imbedding with singular weight}
Let $p\in (-1,1]$ and define $r$ as
$$
  r=\frac{2}{p+1}.
$$
Consider the inequality
\begin{equation}
 \label{theorem:equation:singular weighted imbedding}
  \|u\|_{L^r(\re^2)}\leq \frac{\pi^{1/r}}{2\pi}\int_{\re^2}|\nabla u(x)|\,|x|^p dx.
\end{equation}
Then we have the following statements:
\smallskip

(i) If $p\in [0,1]$ the inequality \eqref{theorem:equation:singular weighted imbedding} holds for all $u\in C_c^{\infty}(\re^2).$ 
\smallskip

(ii) If $p\in (-1,0)$ inequality \eqref{theorem:equation:singular weighted imbedding} holds for all $u\in C_c^{\infty}(\re^2)$ 
with the property that for all $t\in (0,\max|u|)$ the sets $\Omega(t)$ are connected and $0\in\Omega(t).$
\smallskip

(iii) In both cases the constant $\pi^{\frac{1}{r}}/2\pi$ is optimal and it is not attained.
\end{theorem}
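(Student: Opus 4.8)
\textbf{Plan for Theorem \ref{theorem:sobolev imbedding with singular weight}.} The inequalities (i)--(ii) should fall out of Theorem \ref{theorem:main theorem weighted isop. inequlity} through Proposition \ref{proposition:equiv. Hardy sob and isop. twice weighted.} with $q=0$; the real content is part (iii), and inside it the non-attainment.

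\smallskip

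\textit{Parts (i) and (ii).} Take $q=0$ in Proposition \ref{proposition:equiv. Hardy sob and isop. twice weighted.}, so that $r=(q+2)/(p+1)=2/(p+1)$, $\gamma=q/r=0$, $\alpha=p$, and the standing hypothesis $r\ge 1$ of that proposition is exactly $p\le 1$. With $q=0$, inequality \eqref{section intro Hardy-Sob. twice weighted isop. ineq.} reads $|\Omega|^{(p+1)/2}\le C\int_{\delomega}|x|^{p}d\sigma$; since $(p+1)/2=1/r$, Theorem \ref{theorem:main theorem weighted isop. inequlity}(ii) supplies it for \emph{every} bounded smooth $\Omega$ with $C=\pi^{(p+1)/2}/(2\pi)=\pi^{1/r}/(2\pi)$ whenever $p\ge 0$, and then Proposition \ref{proposition:equiv. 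Hardy sob and isop. twice weighted.} yields \eqref{theorem:equation:singular weighted imbedding} for all $u\in C_c^{\infty}(\re^2)$, which is (i). For $p\in(-1,0)$ only part (i) of Theorem \ref{theorem:main theorem weighted isop. inequlity} is available, and only for connected sets containing the origin, so instead of quoting Proposition \ref{proposition:equiv. Hardy sob and isop. twice weighted.} as a black box I would re-run its proof (legitimate, since $r=2/(p+1)>2\ge 1$): the isoperimetric inequality is used there only on the superlevel sets $\Omega(t)=\{|u|>t\}$, which for a.e.\ $t$ are bounded open sets with smooth boundary (Sard), and the hypothesis that each $\Omega(t)$ is connected with $0\in\Omega(t)$ is precisely what lets Theorem \ref{theorem:main theorem weighted isop. inequlity}(i) be applied to $\Omega(t)$ with the same constant $\pi^{1/r}/(2\pi)$. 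The layer-cake identity, Minkowski's integral inequality and the coarea formula then go through verbatim, giving (ii).

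\smallskip

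\textit{Part (iii), optimality.} Fix $R>0$ and a fixed smooth nonincreasing $\phi:\re\to[0,1]$ with $\phi\equiv 1$ on $(-\infty,0]$ and $\phi\equiv 0$ on $[1,\infty)$, and put $u_n(x)=\phi\bigl(n(|x|-R)\bigr)$. Each $u_n\in C_c^{\infty}(\re^2)$ is radial and its superlevel sets are balls centered at the origin, so $u_n$ is admissible both in (i) and in (ii). Working in polar coordinates one gets $\|u_n\|_{L^{r}}^{r}\to\pi R^{2}$ and, after the substitution $s=n(|x|-R)$ and using $\int_0^1(-\phi'(s))\,ds=1$, that $\int_{\re^2}|\nabla u_n|\,|x|^{p}dx\to 2\pi R^{p+1}$ as $n\to\infty$. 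Since $1/r=(p+1)/2$ we have $(\pi R^{2})^{1/r}=\pi^{1/r}R^{p+1}$, hence $\|u_n\|_{L^{r}}\big/\int_{\re^2}|\nabla u_n|\,|x|^{p}dx\to\pi^{1/r}/(2\pi)$, so no constant smaller than $\pi^{1/r}/(2\pi)$ can work.

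\smallskip

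\textit{Part (iii), non-attainment, and the expected obstacle.} Suppose some $u\in C_c^{\infty}(\re^2)$, $u\not\equiv 0$, realizes equality in \eqref{theorem:equation:singular weighted imbedding}; as everything depends only on $|u|$ we may take $u\ge 0$. Tracing the proof of (i)/(ii), the application of Minkowski's integral inequality to $g(x,t)=\chi_{\Omega(t)}(x)$ must then be an equality, and for $r>1$ (that is, $p<1$) this forces $g(x,t)=h(x)\psi(t)$; since $g$ is $\{0,1\}$-valued and $|\Omega(t)|>0$ for every $t\in(0,\max u)$, all superlevel sets $\{u>t\}$, $t\in(0,\max u)$, would coincide, so the continuous function $u$ would omit the whole interval $(0,\max u)$ of values, which is absurd. (This is reinforced by the rigidity part of Theorem \ref{theorem:main theorem weighted isop. inequlity}(iii)--(iv), which would force a.e.\ $\Omega(t)$ to be a ball.) Thus the inequality is strict for $p\in(-1,1)$. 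I expect the remaining endpoint $p=1$, where $r=1$ so that Minkowski's inequality reduces to Fubini's theorem and carries no rigidity, to be the genuinely delicate case and to require a separate treatment; apart from that the argument is routine, the substantive work having already been done in Theorem \ref{theorem:main theorem weighted isop. inequlity} and Proposition \ref{proposition:equiv. Hardy sob and isop. twice weighted.}.
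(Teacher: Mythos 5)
Your argument follows the paper's own route essentially step for step: parts (i) and (ii) are obtained by feeding Theorem \ref{theorem:main theorem weighted isop. inequlity} into Proposition \ref{proposition:equiv. Hardy sob and isop. twice weighted.} with $q=0$ (and your observation that for $p\in(-1,0)$ one must re-run the proposition's proof on the level sets $\Omega(t)$, since the isoperimetric inequality is then only available for connected sets containing the origin, is exactly the reason the hypothesis in (ii) is phrased in terms of $\Omega(t)$); optimality in (iii) is obtained by approximating $\chi_{B_R(0)}$, which is precisely the test sequence the paper indicates; and your non-attainment argument via the equality case of Minkowski's integral inequality is the one the paper gestures at before omitting the details. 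Your execution of that rigidity argument for $r>1$, i.e. $p<1$, is correct.

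The endpoint $p=1$ that you flag is, however, not merely delicate: the non-attainment claim actually fails there, so this is a gap that cannot be closed. For $p=1$ one has $r=1$, the Minkowski step degenerates to Fubini and carries no rigidity, and the remaining inequality in the chain becomes an equality for \emph{every} radially nonincreasing $u\in C_c^{\infty}(\re^2)$: writing $\rho(t)$ for the radius of the ball $\{u>t\}$, the coarea formula gives
$$
\int_{\re^2}|\nabla u(x)|\,|x|\,dx=\int_0^{\max u}\Big(\int_{\{u=t\}}|y|\,d\sigma(y)\Big)dt=\int_0^{\max u}2\pi\rho(t)^2\,dt=2\int_0^{\max u}|\{u>t\}|\,dt=2\,\|u\|_{L^1},
$$
so that $\|u\|_{L^1}=\tfrac{1}{2}\int_{\re^2}|\nabla u|\,|x|\,dx$ with $\tfrac12=\pi^{1/r}/(2\pi)$. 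Hence the constant is attained at $p=1$, and part (iii) as stated is incorrect at that endpoint; the paper's proof, which dismisses these details as standard, does not address this case either. For $p\in(-1,1)$ your proof is complete and matches the intended argument.
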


\begin{remark}
In the embedding (ii) of Theorem \ref{theorem:sobolev imbedding with singular weight} neither of the hypothesis $0\in\Omega(t)$ nor
$\Omega(t)$ is connected can be relaxed.
\end{remark}

\begin{proof}
Part (i) and (ii) follow directly from Theorem \ref{theorem:main theorem weighted isop. inequlity} and Proposition \ref{proposition:equiv. Hardy sob and isop. twice weighted.}. It remains to see that the constant in the  imbedding is optimal. Note that we have equality in \eqref{eq:proof imbedding:using minkowski ineq} if and only if $\chi_{\Omega}(t)$ can be factorized as $\chi_{\Omega}(t)=\Phi(x)\Psi(t)$ (see \cite{HardyLittlewoodPolya}, Theorem 202). And in \eqref{eq:proof of imbedding:using isop ineq twice weighted.} we have equality if $\Omega(t)$ is a ball centered at the origin (unless $0\in\partial\Omega(t)).$ This suggests that one should take $u$ as the characteristic function of a ball. We proceed thus by approximating $\chi_{B_R(0)}$ for some $R>0.$ These arguments are standard and we omit the details
\end{proof}
\smallskip

Another application of Theorem \ref{theorem:main theorem weighted isop. inequlity} is a generalization of an estimate by Flucher \cite{Flucher} for $|\Omega|$ in terms of the Green's function (see Theorem 17 in \cite{Flucher}). The Green's function for $\Omega$ with singularity at $x\in\Omega$ will be denoted by $G_{\Omega,x}.$ 

\begin{theorem}
\label{theorem:upper bound for R by Greens function with weight}
Let $\Omega\subset\re^2$ be a bounded open set with smooth boundary $\delomega,$ and $x\in\Omega$. Then the inequality 
$$
  |\Omega|^{1-\frac{\beta}{2}}\leq \frac{1}{4\pi^{1+\frac{\beta}{2}}} \int_{\delomega} \frac{1}{|y|^\beta|\nabla G_{\Omega,x}(y)|}d\sigma(y).
$$
holds true

(i) for all $0\leq \beta\leq 2$ if $\Omega$ is connected and $0\in\Omega.$

(ii) for all $\beta\leq 0$ without restriction on $\Omega.$
\end{theorem}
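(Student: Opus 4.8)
The plan is to reduce the weighted Green's function estimate to the weighted isoperimetric inequality of Theorem \ref{theorem:main theorem weighted isop. inequlity} via the coarea formula applied to the sublevel sets of $G_{\Omega,x}$, mimicking the structure of Flucher's original argument but carrying the weight $|y|^{-\beta}$ through. First I would recall the basic properties of the Green's function: $G_{\Omega,x}$ is positive and harmonic in $\Omega\setminus\{x\}$, vanishes on $\delomega$, and has a logarithmic singularity at $x$, so that its superlevel sets $U_t=\{y\in\Omega:\,G_{\Omega,x}(y)>t\}$ form a nested family of open sets exhausting $\Omega$ as $t\to 0^+$, with $|U_t|\to|\Omega|$. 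I would also use the normalization $\int_{\partial U_t}|\nabla G_{\Omega,x}|\,d\sigma=1$ for every regular value $t>0$, which follows from the divergence theorem and the fact that the only singularity enclosed is the one at $x$ with the standard normalization of $G$.

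The core computation is as follows. Set $F(t)=\int_{\partial U_t}|y|^{-\beta}|\nabla G_{\Omega,x}(y)|^{-1}\,d\sigma(y)$; the quantity to be estimated is $F(0^+)=\int_{\delomega}|y|^{-\beta}|\nabla G_{\Omega,x}|^{-1}d\sigma$. I would apply the Cauchy–Schwarz inequality on $\partial U_t$ in the form
\begin{equation*}
  \left(\int_{\partial U_t}|y|^{-\beta/2}\,d\sigma\right)^2\leq\left(\int_{\partial U_t}\frac{|y|^{-\beta}}{|\nabla G_{\Omega,x}|}\,d\sigma\right)\left(\int_{\partial U_t}|\nabla G_{\Omega,x}|\,d\sigma\right)=F(t),
\end{equation*}
so that the weighted perimeter $P_{-\beta/2}(U_t):=\int_{\partial U_t}|y|^{-\beta/2}d\sigma$ satisfies $P_{-\beta/2}(U_t)^2\leq F(t)$. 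Now I apply Theorem \ref{theorem:main theorem weighted isop. inequlity} to $U_t$ with exponent $p=-\beta/2$: when $0\le\beta\le 2$ we have $p\in[-1,0]$, and since $U_t$ is connected (as a superlevel set of a function harmonic away from the single pole $x$, by the maximum principle these sets are connected) with $0\in U_t$ once $t$ is small enough — because $0\in\Omega$ and eventually $U_t$ contains any given compact subset of $\Omega$ — hypothesis (i) of that theorem applies and gives $P_{-\beta/2}(U_t)\ge 2\pi(|U_t|/\pi)^{(1-\beta/2)/2}$. When $\beta\le 0$ we have $p=-\beta/2\ge 0$ and hypothesis (ii) applies with no restriction on $U_t$. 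Combining, $F(t)\ge 4\pi^2(|U_t|/\pi)^{1-\beta/2}=4\pi^{1+\beta/2}|U_t|^{1-\beta/2}$ for all small regular values $t$. Letting $t\to 0^+$ and using $|U_t|\to|\Omega|$ together with the fact that $F(t)\to F(0^+)$ (justified by the smoothness of $\delomega$ and Hopf's lemma, which keeps $|\nabla G_{\Omega,x}|$ bounded below near $\delomega$) yields exactly
\begin{equation*}
  \int_{\delomega}\frac{d\sigma(y)}{|y|^\beta|\nabla G_{\Omega,x}(y)|}\ge 4\pi^{1+\beta/2}|\Omega|^{1-\beta/2},
\end{equation*}
which rearranges to the claimed inequality.

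The main obstacle I anticipate is the limiting argument $t\to 0^+$ and the verification of the hypotheses of Theorem \ref{theorem:main theorem weighted isop. inequlity} on the sublevel sets $U_t$: one must ensure $U_t$ has piecewise $C^1$ boundary for a.e.\ $t$ (Sard's theorem applied to $G_{\Omega,x}$), that it is connected (which needs the maximum principle argument — a superlevel set $\{G>t\}$ of a function that is harmonic in $\Omega\setminus\{x\}$, continuous up to $\delomega$ where it vanishes, cannot have a bounded-away-from-$x$ component on which $G$ would attain an interior maximum), and — critically for case (i) — that $0\in U_t$ for all sufficiently small $t$, which is where the hypothesis $0\in\Omega$ enters and is used. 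A secondary technical point is controlling the behaviour of $F(t)$ as $t\to0$: since $\delomega$ is smooth and $G_{\Omega,x}>0$ in $\Omega$, Hopf's lemma gives $|\nabla G_{\Omega,x}|\ge c>0$ on $\delomega$, so $1/|\nabla G_{\Omega,x}|$ is bounded there and the integrand is integrable (note $\beta\le 2$ and, in case (i), $0\notin\delomega$ is not assumed but $|y|^{-\beta}$ is still integrable on the smooth curve $\delomega$ for $\beta<2$; the borderline $\beta=2$ needs a separate remark or the mild assumption that $0\notin\delomega$). These are all routine but must be stated; the genuinely new content is simply the pairing of Cauchy–Schwarz on each level set with the weighted isoperimetric inequality.
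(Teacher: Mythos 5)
Your proposal is correct and uses exactly the same three ingredients as the paper: the weighted isoperimetric inequality with $p=-\beta/2$, the Cauchy--Schwarz splitting of $|y|^{-\beta/2}$ against $|\nabla G_{\Omega,x}|^{1/2}$, and the normalization $\int|\nabla G_{\Omega,x}|\,d\sigma=1$. The only difference is that the paper applies all of this directly on $\partial\Omega$ (i.e.\ at the level $t=0$, where the normalization already holds by Lemma \ref{lemma:properties of Green's function}), so the entire detour through the superlevel sets $U_t$, their connectedness, Sard's theorem, Hopf's lemma and the limit $t\to 0^+$ is unnecessary.
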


In the proof we will use the following simple lemma. 

\begin{lemma}
\label{lemma:properties of Green's function}
$G_{\Omega,x}$ satisfies for every $t\in[0,\infty)$ the two equations:
$$
  \int_{\{G_{\Omega,x}<t\}}\left|\nabla G_{\Omega,x}(y)\right|^2 dy=t
  \;\text{ and }\;
  \int_{\{G_{\Omega,x}=t\}}\left|\nabla G_{\Omega,x}(y)\right| d\sigma(y)=1.
$$
\end{lemma}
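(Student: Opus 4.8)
The plan is to argue directly from the defining properties of $G:=G_{\Omega,x}$: it is positive and harmonic on $\Omega\setminus\{x\}$, vanishes on $\delomega$, and $G(y)+\tfrac{1}{2\pi}\log|y-x|$ extends harmonically across $x$, so that $-\Delta G=\delta_x$. Both identities then follow from the divergence theorem together with the coarea formula, carried out in the order below.

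First I would prove the second identity. Fix $t>0$; then $\{G>t\}$ is an open set containing $x$ with boundary $\{G=t\}$ (it is exactly the boundary because the nonconstant harmonic function $G$ has level sets with empty interior), and on the smooth part of $\{G=t\}$ the outer unit normal of $\{G>t\}$ is $-\nabla G/|\nabla G|$. Since $\operatorname{div}\nabla G=\Delta G=0$ on $\Omega\setminus\{x\}$, the divergence theorem applied on $\{G>t\}\setminus\overline{B_\varepsilon(x)}$ gives, for small $\varepsilon>0$,
\begin{equation*}
  0=-\int_{\{G=t\}}|\nabla G|\,d\sigma+\int_{\partial B_\varepsilon(x)}\nabla G(y)\cdot\frac{-(y-x)}{\varepsilon}\,d\sigma(y),
\end{equation*}
the second boundary term coming from the part of $\partial\big(\{G>t\}\setminus\overline{B_\varepsilon(x)}\big)$ lying on $\partial B_\varepsilon(x)$, whose outer normal points towards $x$. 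Using $\nabla G(y)=-\tfrac{1}{2\pi}\,\tfrac{y-x}{|y-x|^2}+O(1)$ near $x$, that term tends to $1$ as $\varepsilon\to0$, which yields $\int_{\{G=t\}}|\nabla G|\,d\sigma=1$. The case $t=0$ is handled identically, replacing $\{G>t\}$ by $\Omega$ and using $G=0$ on $\delomega$.

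Second, the first identity is obtained by the coarea formula: since $\delomega$ has Lebesgue measure zero, $\{G<t\}$ coincides up to a null set with $\{0<G<t\}$, and applying the coarea formula to $G$ with the weight $|\nabla G|$ yields $\int_{\{G<t\}}|\nabla G|^2\,dy=\int_0^t\big(\int_{\{G=s\}}|\nabla G|\,d\sigma\big)\,ds=\int_0^t 1\,ds=t$ by the first step.

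The one point needing care — the main obstacle — is the regularity of the level sets $\{G=t\}$ at critical values of $G$, which can occur when $\Omega$ is not simply connected. I would handle it by first proving both identities for regular values $t$ only, which form a set of full measure by Sard's theorem, so that the computations above are literally valid; the first identity then holds for every $t$ by continuity of its left-hand side in $t$, while the second extends to all $t$ because $G$ is real analytic, so $\{G=t\}$ is always a finite union of analytic arcs of finite length and $\{G>t\}$ a set of finite perimeter, whence the Gauss--Green identity above remains valid verbatim. In any event the applications (see Theorem~\ref{theorem:upper bound for R by Greens function with weight}) invoke the lemma only through the coarea formula, for which the regular-value version already suffices.
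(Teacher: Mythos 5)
Your proof is correct, but it runs in the opposite direction from the paper's. The paper first establishes the volume identity $\int_{\{G<t\}}|\nabla G|^2\,dy=t$ directly from the weak formulation of the Green's function, $\int_{\Omega}\nabla G\cdot\nabla f\,dy=f(x)$ for all $f\in W^{1,2}_0(\Omega)$, by testing with $f=\min\{G,t\}$; it then obtains the flux identity $\int_{\{G=t\}}|\nabla G|\,d\sigma=1$ by rewriting the first integral via the coarea formula and differentiating in $t$. You instead prove the flux identity first, by the classical Gauss--Green computation on $\{G>t\}\setminus\overline{B_\varepsilon(x)}$ using the logarithmic asymptotics at the pole, and then integrate via the coarea formula to recover the volume identity; your normal-direction and $\varepsilon\to0$ computations are right. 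Each route has a distinct advantage: the paper's test-function argument gives the first identity for \emph{every} $t$ with no discussion of level-set regularity, since $\min\{G,t\}$ is an admissible Sobolev test function whether or not $t$ is a regular value, but its final differentiation step recovers the integrand only where $s\mapsto\int_{\{G=s\}}|\nabla G|\,d\sigma$ is continuous, so at exceptional $t$ it is no more automatic than your argument. Your route needs the excision computation and the Sard-type discussion of critical values that you supply, but it produces directly the identity actually invoked in Theorem \ref{theorem:upper bound for R by Greens function with weight} (the case $t=0$), and your closing remark that the application only requires the regular-value/coarea version is accurate.
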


\begin{proof} By the definition of Green's function $\int_{\Omega}\nabla G_{\Omega,x}(y)\nabla f(y)dy=f(x)$ for all $f\in W^{1,2}_0(\Omega).$ In particular chosing $f(y)=\inf\{G_{\Omega,x}(y),t\}$ gives the first equality. From the coarea formula we have
$$
  \int_{\{G_{\Omega,x}<t\}}|\nabla G_{\Omega,x}(y)|^2 dy=\int_0^t\left(\int_{\{G_{\Omega,x}=s\}}|\nabla G_{\Omega,x}(y)|d\sigma(y)\right)ds.
$$
The second equality therefore follows from the first one by derivation.
\end{proof}
\smallskip

\begin{proof}[Proof (Theorem \ref{theorem:upper bound for R by Greens function with weight}).] Set $p=-\beta/2$ and let $R>0$ be such that $|\Omega|=\pi R^2.$ Then we get from Theorem \ref{theorem:main theorem weighted isop. inequlity}, 
$$
  2\pi R^{1+p}\leq \intdelomega |y|^pd\sigma(y)=\intdelomega \frac{\sqrt{|\nabla G_{\Omega,x}(y)|}}{\sqrt{|y|^{\beta}|\nabla G_{\Omega,x}(y)|}} d\sigma(y).
$$
We apply H\"older inequality and the second equality in Lemma \ref{lemma:properties of Green's function} with $t=0$  to get that
\begin{align*}
  2\pi R^{1+p}\leq &\left(\intdelomega |\nabla G_{\Omega,x}(y)|d\sigma(y)\right)^{\frac{1}{2}} \left(\intdelomega \frac{1}{|y|^{\beta}|\nabla G_{\Omega,x}(y)|}d\sigma(y)\right)^{\frac{1}{2}}
  \smallskip \\
  =& \left(\intdelomega \frac{1}{|y|^{\beta}|\nabla G_{\Omega,x}(y)|}d\sigma(y)\right)^{\frac{1}{2}}.
\end{align*}
From this the theorem follows immediately.
\end{proof}

\bigskip

\noindent\textbf{Acknowledgements} I have benefitted from helpful discussions and comments from A. Adimurthi, F. Morgan, P. Roy and K. Sandeep.

\end{document}